\newcommand{\Q}[1]{\mathbb{Q}(\sqrt{#1})}
\newcommand{\Z}{\mathbb{Z}}
\newcommand{\N}{\mathbb{N}}
\newcommand{\Fro}{\mathfrak{o}}
\newcommand{\Fra}{\mathfrak{a}}
\newcommand{\Frn}{\mathfrak{n}}
\newcommand{\Frs}{\mathfrak{s}}
\newcommand{\Frv}{\mathfrak{v}}
\newtheorem{Rmk}{Remark}
\newtheorem{Lem}{Lemma}
\newtheorem{Thm}{Theorem}
\newcommand{\qf}[1]{\langle #1 \rangle}
\newcommand{\conj}[1]{\overline{#1}}
\newcommand{\binlattice}[4]{\begin{small}\begin{pmatrix}%
  #1 & #2 \\
  #3 & #4
\end{pmatrix}\end{small}}
\newcommand{\terlattice}[9]{\begin{small}{\begin{pmatrix}%
  #1 & #2 & #3 \\
  #4 & #5 & #6 \\
  #7 & #8 & #9
\end{pmatrix}}\end{small}}
\newcommand{\comega}{\conj\omega}
\newcommand{\Tr}{\operatorname{Tr}}
\newcommand{\rank}{\operatorname{rank}}
\newcommand{\ra}{\rightarrow}
\newcommand{\vol}[1]{{\text{\rm vol:}#1}}
\begin{document}

\title[Even universal binary Hermitian lattices]
{Even universal binary Hermitian lattices \\over imaginary quadratic fields}

\subjclass[2000]{Primary 11E39; Secondary 11E20, 11E41}

\author[Byeong Moon Kim]{Byeong Moon Kim}
\address{Department of Mathematics, Kangnung National University, Kangnung, 210-702, Korea.}%
\email{kbm@kangnung.ac.kr}

\author[Ji Young Kim]{Ji Young Kim}
\address{School of Mathematics, Korea Institute for Advanced Study, Hoegiro 87, Dongdaemun-gu, Seoul, 130-722, Korea.}%
\email{jykim@kias.re.kr}

\author[Poo-Sung Park]{Poo-Sung Park}
\address{Department of Mathematics Education, Kyungnam University, 449 Wolyong-Dong, Masan, Kyungnam, 631-701, Korea.}%
\email{sung@kias.re.kr}

\begin{abstract}
A positive definite even Hermitian lattice is called \emph{even universal} if it represents all even positive integers. We introduce a method to get all even universal binary Hermitian lattices over imaginary quadratic fields $\Q{-m}$ for all positive square-free integers $m$ and we list optimal criterions on even universality of Hermitian lattices over $\Q{-m}$ which admits even universal binary Hermitian lattices.
\end{abstract}

\keywords{binary Hermitian lattices, even universal}
\maketitle%

\section{Introduction}

Lagrange's four square theorem says that every positive integer can be written as a sum of four
squares of integers. A quadratic form, like sum of four squares, is called \emph{universal}, if it
represents all positive integers. In 1997, Conway, Schneeberger and Bhargava announced the \emph{fifteen theorem} for classical universal quadratic forms, which characterizes the universality by representability of nine critical numbers, namely, $1$, $2$, $3$, $5$, $6$, $7$, $10$, $14$ and $15$ (see \cite{jhC-00}, \cite{mB-00}). Recently, Bhargava and Hanke enunciated that they proved the \emph{290-theorem} characterizing the universality of (nonclassical) quadratic forms. That is, if a (nonclassical) quadratic form represents 29 numbers, say, $1$, $2$, $3$, $5$, $6$, $7$, $10$, $13$, $14$, $15$, $17$, $19$, $21$, $22$, $23$, $26$, $29$, $30$, $31$, $34$, $35$, $37$, $42$, $58$, $93$, $110$, $145$, $203$, $290$, then it is universal (see \cite{mB-jH-06}).

It is a natural attempt to extend these results to Hermitian lattices over imaginary quadratic
fields. A positive definite Hermitian lattice over an imaginary quadratic field is called \emph{universal} if it represents all positive integers. There are many contributions to list all universal binary Hermitian lattices over imaginary quadratic fields (see \cite{agE-aK-97(1)}, \cite{hI-00}, \cite{jhK-psP-07}). In the previous article \cite{bmK-jyK-psP(1)}, the authors provided a list of universal Hermitian lattices over imaginary quadratic field $\Q{-m}$ for all positive square-free integers $m$, and the  Conway-Schneeberger-Bhargava type criterion on universality, so called the \emph{fifteen theorem for universal Hermitian lattices}.

The goal of this article is to prove the analogue of the previous paper \cite{bmK-jyK-psP(1)} for even universal Hermitian lattices over imaginary quadratic fields $\Q{-m}$ for all possible positive square-free integers $m$. First, we find all imaginary quadratic fields $\Q{-m}$ for positive square-free integers $m$ that admit an even universal binary Hermitian lattice and classify all such lattices. A Hermitian lattice is \emph{even} if its norm ideal is generated by $2$. A primitive even Hermitian lattice is called \emph{even universal} if it represents all positive even integers. Next, we determine the \emph{even critical set} $C[m]$ of an imaginary quadratic field $\Q{-m}$ which admits an even universal binary Hermitian lattice.

\section{Review on Hermitian lattices over imaginary quadratic fields}

In this chapter, we will review Hermitian lattices, in particular, we will review a matrix presentation of both free and non-free Hermitian lattices. That was suggested by the authors in \cite{bmK-jyK-psP(1)}.

Let $E$ denote an imaginary quadratic field $\Q{-m}$ for a positive square-free integer $m$ with nontrivial $\mathbb{Q}$-involution and let $\Fro$ be the ring of integers of $E$. It is well-known that $\Fro = \Z[\omega]$, where $\omega = \sqrt{-m}$ if $m \equiv 1, 2 \pmod{4}$ or $\omega = \frac{1+\sqrt{-m}}2$ if $m \equiv 3 \pmod{4}$. An \emph{$\Fro$-lattice} $L$ means a finitely generated $\Fro$-module in the Hermitian space $(V,H)$, where $V$ is an $n$-dimensional vector space over $E$ with a nondegenerate Hermitian map $H: V \times V \ra E$. Without specific mention, we will assume that all lattices are positive definite and $H(v_1, v_2) \in \Fro$ for all $v_1, v_2 \in L$. Since $H(v_1,v_2)=\conj{H(v_2,v_1)}$, the (Hermitian) norm $H(v) = H(v,v)$ is in $\mathbb{Z}$ for every $v \in L$. If $a = H(v)$ for some $v \in L$, we say that $a$ is represented by $L$.

The lattice $L$ can be written as
\[
    L = \Fra_1 v_1 + \Fra_2 v_2 + \dotsb + \Fra_n v_n
\]
with ideals $\Fra_i \subset \Fro$ and vectors $v_i \in V$. The
\emph{volume ideal} $\Frv{L}$ of $L$ is defined by
\[
    \Frv{L} = (\Fra_1 \conj{\Fra}_1) (\Fra_2 \conj{\Fra}_2) \dotsb (\Fra_n\conj{\Fra}_n) \det(H(v_i,v_j))_{1\le i,j \le n}.
\]
The volume of $L$ is a principal ideal. The \emph{norm ideal} $\Frn{L}$ of $L$ is an $\Fro$-ideal generated by the set $\{H(v)| v \in L\}$. The \emph{scale ideal} $\Frs{L}$ of $L$ is an $\Fro$-ideal generated by the set $\{H(v,w)| v,w \in L\}$. It is clear that $\Frn{L} \subseteq \Frs{L}$. If $\Frn{L} = \Frs{L}$, then we call $L$ \emph{normal}. Otherwise, we call $L$ \emph{subnormal}. This paper focuses on the subnormal lattices with $\Frn{L} = 2\Fro$.

If $L$ is a free $\Fro$-module, then we can write $L = \Fro v_1 + \dotsb + \Fro v_n$. The matrix $M_L = (H(v_i,v_j))_{1\le i,j \le n}$ is called the Gram matrix of $L$ and is a matrix presentation of $L$. If the matrix is diagonal, we denote it by $\qf{H(v_1),H(v_2),\dotsc,H(v_n)}$. But, if $L$ is not a free $\Fro$-module, then $L= \Fro v_1 + \dotsb + \Fro v_{n-1} + \Fra v_n$ for some ideal $\Fra \subset \Fro$ \cite[81:5]{otO-73}. Since any ideal in $\Fro$ is generated by at most two elements, we can write $L= \Fro v_1 + \dotsb + \Fro v_{n-1} + (\alpha, \beta)\Fro v_n$ for some $\alpha, \beta \in \Fro$. Therefore, we consider the following $(n+1) \times (n+1)$-matrix as a formal Gram matrix for $L$:
\[
    M_L =
    \begin{pmatrix}
        H(v_1, v_1)         & \dotsc & H(v_1, \alpha v_n)        & H(v_1, \beta v_n)\\
        \vdots              & \ddots & \vdots                    &\vdots\\
        H(\alpha v_n,  v_1) & \dotsc & H(\alpha v_n, \alpha v_n) & H(\alpha v_n, \beta v_n)\\
        H(\beta  v_n,  v_1) & \dotsc & H(\beta  v_n, \alpha  v_n) & H(\beta  v_n, \beta v_n)\\
    \end{pmatrix}.
\]
Note that this matrix $M_L$ is positive semi-definite, but it represents an $n$-ary positive definite Hermitian lattice. A scaled lattice $L^a$ is obtained from the Hermitian map $H_{L^a}(\cdot,\cdot) = a H_L(\cdot,\cdot)$ with $a \in \mathbb{Q}$. If $M$ is a matrix presentation of a lattice $L$, $aM$ is the matrix presentation of a scaled lattice $L^a$.

Considering a $2n$-dimensional vector space $\widetilde{V}$ over $\mathbb{Q}$ corresponding to $V$ as defined in \cite{nJ-40}, we can regard $(V,H)$ over $E$ as a $2n$-dimensional quadratic space
$(\widetilde{V}, B_H)$ over $\mathbb{Q}$ where $B_H (x,y) = \frac{1}{2} [ H(x,y) + H(y,x) ] = \frac{1}{2} \Tr_{E/\mathbb{Q}}(H(x,y))$. Analogously, by viewing $L$ as a $\mathbb{Z}$-lattice in $(\widetilde{V}, B_H)$ we can obtain a quadratic $\mathbb{Z}$-lattice $\widetilde{L}$ in $\widetilde{V}$ associated to a Hermitian $\Fro$-lattice $L$ in $V$. For example, the quadratic form associated to the Hermitian lattice $\qf{1}$ over $\Q{-m}$ is
\[
    \begin{cases}
        ~x_1^2 + m y_1^2                       &\text{ if } m \equiv 1, 2 \pmod{4},\\
        ~x_1^2 + x_1 y_1 + \frac{1+m}{4} y_1^2 &\text{ if } m \equiv 3
        \pmod{4}.
    \end{cases}
\]

For unexplained terminologies, notations and basic facts about quadratic forms and Hermitian
lattices, we refer the readers to \cite{otO-73}, \cite{nJ-40} and \cite{bmK-jyK-psP(1)}.

\section{Even universal binary Hermitian lattices}

A (even resp.) Hermitian lattice is said to be (\emph{even} resp.) \emph{universal} if it represents every positive (even resp.) integers. If a Hermitian lattice $L$ is not (even resp.) universal, we define the (\emph{even} resp.) \emph{truant} of $L$ to be the smallest positive (even resp.) integer not represented by $L$. An \emph{even escalation} of a nonuniversal Hermitian lattice $L$ is defined to be a process of getting any Hermitian lattice which generated by $L$ and any vectors whose Hermitian norm is equal to the even truant of $L$. An \emph{even escalation lattice} is a lattice that can be obtained from a finite sequence of successive even escalations starting from the trivial lattice $\{ 0 \}$. Then $\qf{2}$ is the even escalation lattice of rank $1$. A \emph{spurious escalation} of a Hermitian lattice $L$ is a process of finding any lattice $\widetilde{L}$ which contains the lattice $L$ and whose rank is the same as $\rank{L}$. Since $\Frv L \subseteq \Frv \widetilde{L}$, a spurious escalation terminates with finite steps. Important in the proof Theorem \ref{Thm:EvenUniv} is the notion of even escalations and spurious escalations. In particular, to find all candidates of even universal binary Hermitian lattices, the spurious escalations are imperative. In this article, if there is no confusion, we use the term escalation instead of the even escalation.

We set
\[
    [a_1, \alpha,  a_2] := \binlattice{a_1}{\alpha}{\conj\alpha}{a_2} ~\text{ and }~
    [a_1, a_2, a_3 \,; \alpha, \beta, \gamma]
                        := \terlattice{a_1}{\alpha}{\beta}{\conj\alpha}{a_2}{\gamma}{\conj\beta}{\conj\gamma}{a_3}\\
\]
for convenience.

\begin{Lem}\label{Lem:EvenUniv}
A primitive even Hermitian lattice exists over the field $\Q{-m}$ only when $m \not\equiv 3 \pmod{4}$.
\end{Lem}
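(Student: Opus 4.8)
The plan is to extract a congruence obstruction from the trace form $\Tr_{E/\mathbb{Q}}$, where $E = \Q{-m}$. Suppose $L$ is a primitive even Hermitian lattice over $E$; here primitivity is read as $\Frs{L} = \Fro$, which together with $\Frn{L} = 2\Fro$ is precisely the subnormal situation this paper studies. The first step is the observation that an even lattice represents only even integers: for $v \in L$ the norm $H(v)$ is a rational integer lying in $\Frn{L} = 2\Fro$, hence in $2\Z$.

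Next I would feed this into the polarization identity. For arbitrary $v, w \in L$,
\[
    H(v+w) = H(v) + H(w) + H(v,w) + H(w,v),
\]
and since $H(w,v) = \conj{H(v,w)}$ the two cross terms sum to $\Tr_{E/\mathbb{Q}}(H(v,w))$. Thus $\Tr_{E/\mathbb{Q}}(H(v,w)) = H(v+w) - H(v) - H(w)$ is a difference of even integers, so $\Tr_{E/\mathbb{Q}}(H(v,w)) \in 2\Z$ for all $v, w \in L$. I would then upgrade this from the generators of $\Frs{L}$ to the whole scale ideal: a general element of $\Frs{L}$ has the form $\sum_i \alpha_i H(v_i, w_i)$ with $\alpha_i \in \Fro$ and $v_i, w_i \in L$, and sesquilinearity together with the fact that $L$ is an $\Fro$-module lets us rewrite each summand $\alpha_i H(v_i, w_i)$ as a value $H(v_i', w_i')$ with $v_i', w_i' \in L$; hence $\Tr_{E/\mathbb{Q}}(\Frs{L}) \subseteq 2\Z$.

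Finally, primitivity gives $\Frs{L} = \Fro$, so $\Tr_{E/\mathbb{Q}}(\Fro) \subseteq 2\Z$. When $m \equiv 3 \pmod{4}$ this fails, because $\omega = \frac{1+\sqrt{-m}}{2} \in \Fro$ has $\Tr_{E/\mathbb{Q}}(\omega) = 1$; hence no such $L$ can exist, and $m \not\equiv 3 \pmod{4}$. (Consistently, when $m \equiv 1, 2 \pmod{4}$ every element $a + b\sqrt{-m}$ of $\Fro$ has even trace $2a$, so the obstruction is vacuous.) I do not anticipate a real difficulty here: the argument is only a couple of lines, and the sole points needing care are fixing the meaning of ``primitive'' as $\Frs{L} = \Fro$ and checking the $\Fro$-stability of the generating set $\{H(v,w)\}$, which is what makes the trace bound genuinely propagate from generators to all of $\Frs{L}$.
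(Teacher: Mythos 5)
Your core mechanism is, in substance, the same as the paper's: the paper's two computations of $H(u+v)$ and $H(u+\omega v)$ are precisely your polarization identity applied with the multipliers $1$ and $\omega$, i.e.\ the statements that $\Tr_{E/\mathbb{Q}}(H(u,v))$ and $\Tr_{E/\mathbb{Q}}(\conj\omega\, H(u,v))$ must be even. Your ``upgrade to the whole scale ideal'' via $\Fro$-stability packages both computations at once, which is a clean way to organize the argument.

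The one genuine problem is the step you yourself flagged: reading ``primitive'' as $\Frs{L}=\Fro$. That is not what the paper means, and its own Table \ref{tbl:Primitive-even-universal-binary-Hermitian-lattices} refutes it: $[2,-1+\omega,2]$ over $\Q{-1}$ is listed as primitive, yet its scale ideal is $(2,-1+i)=(1+i)\neq\Fro$. The operational meaning used in the paper's proof is only that $\Frs{L}\not\subseteq 2\Fro$ (i.e.\ $L$ is not the scaling by $2$ of an integral lattice), so that some $H(u,v)=c+d\omega$ has $c$ or $d$ odd. Under that weaker hypothesis your final step does not close: an element such as $c+0\cdot\omega$ with $c$ odd has even trace, so ``$\Tr(\Frs{L})\subseteq 2\Z$ contradicts $\Tr(\omega)=1$'' only bites once you know $\omega\in\Frs{L}$. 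Two repairs are available. (a) Use the full strength of what you already proved, namely $\Tr(\alpha\beta)\in 2\Z$ for all $\alpha\in\Fro$ and $\beta\in\Frs{L}$: taking $\alpha=1$ and $\alpha=\conj\omega$ and writing $\beta=c+d\omega$ with $\omega=\frac{1+\sqrt{-m}}2$ gives that $2c+d$ and $c+\frac{1+m}{2}d$ are even; since $\frac{1+m}{2}$ is even for $m\equiv 3\pmod 4$, both $c$ and $d$ are even, hence $\Frs{L}\subseteq 2\Fro$ --- the exact negation of primitivity, and exactly the paper's two cases. (b) Alternatively, justify your reading in the case at hand: for $m\equiv 3\pmod 4$ the prime $2$ is unramified and $\Frs{L}$ is conjugation-stable, so the only conjugation-stable ideals between $2\Fro$ and $\Fro$ are those two, and primitivity then does force $\Frs{L}=\Fro$. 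Either way the lemma follows; as written, the implication ``primitive $\Rightarrow\Frs{L}=\Fro$'' is asserted rather than proved, and it is false for the ramified fields appearing elsewhere in the paper.
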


\begin{proof}
Suppose $L$ is a primitive even universal binary Hermitian lattice over $\Q{-m}$, where $m \equiv 3 \pmod{4}$. Let $u$ and $v$ be vectors of $L$ such that $H(u) = 2a$, $H(v) = 2b$, $H(u, v) = c +
d\omega$ for some integers $a,b,c,d$. Since $L$ is primitive, both $c$ and $d$ are not even. If $d$ is odd, then $H(u + v)= 2a + 2b + 2c +d$ is not even. If $d$ is even, then $c$ is odd and $H(u +
\omega v) = 2a + \frac{1+m}{2} b + c + \frac{1+m}{2} d$ is not even. Hence we get the result.
\end{proof}

\begin{Rmk}\label{Rmk:BH}
Let $L$ be an even binary Hermitian lattice over $\Q{-m}$ and let $f_L$ be a corresponding integral quadratic form of $L$. Since $\frac{1}{2} f_L$ is a nonclassical integral quadratic form, the
universality of $\frac{1}{2} f_L$ can be determined by Bhargava and Hanke's 290-theorem \cite{mB-jH-06}. The theorem is that a nonclassical integral quadratic form over $\mathbb{Q}$ represents all $a \in {\rm BH}$ if and only if it is universal, where
\[
    {\rm BH} = \left\{
            \begin{tabular}{p{3mm}p{3mm}p{3mm}p{3mm}p{3mm}p{3mm}p{3mm}p{3mm}p{3mm}p{3mm}p{3mm}p{3.5mm}p{3.5mm}p{3.5mm}p{3.5mm}}
            1,  & 2,  & 3,  & 5,  & 6,  & 7,  & 10, & 13, & 14, & 15, & 17, & 19,  & 21,  & 22,  & \\ %
            23, & 26, & 29, & 30, & 31, & 34, & 35, & 37, & 42, & 58, & 93, & 110, & 145, & 203, & 290 %
            \end{tabular}
               \right\}.
\]
Therefore $L$ is even universal if and only if $L$ represents all $2a$ for $a \in {\rm BH}$.
\end{Rmk}

\begin{Thm}\label{Thm:EvenUniv}
A primitive even universal binary Hermitian lattice exists over the field $\Q{-m}$ if and only if
$m$ is
\[
    1, 2, 5, 6, 10, 13, 14, 21, 22, 29, 34, 37 \text{ or } 38.
\]
Moreover, we have a complete list of fifty two primitive even universal binary Hermitian lattices
(see Table \ref{tbl:Primitive-even-universal-binary-Hermitian-lattices}).
\begin{table}[h]
\begin{center}
\begin{footnotesize}
\begin{tabular}{p{1.1cm}|lll} \hline%
\multicolumn{1}{c|}{\rm Field} %
            & \multicolumn{3}{c}{\rm Primitive even universal binary Hermitian lattices ($\vol{a}$ means the volume $a\Fro$)} \\ \hline %
$\Q{-1}$    & $[2, {-1+\omega}, 2]_\vol{2}$,              %
            & $[2, 1, 2]_\vol{3}$,                        %
            & $[2,-1+\omega,4]_\vol{6}$,                \\%
            & $[2, 1, 4]_\vol{7}$,                        %
            & $[2, {-1+\omega}, 6]_\vol{10}$,             %
            & $[2, 1, 6]_\vol{11}$,                     \\\hline%

$\Q{-2}$    & $[2, {-1+\omega}, 2]_\vol{1}$,              %
            & $[2, {\omega}, 2]_\vol{2}$,                 %
            & $[2, 1, 2]_\vol{3}$,                      \\%
            & $[2,-1+\omega,4]_\vol{5}$,                  %
            & $[2,\omega,4]_\vol{6}$,                     %
            & $[2,1,4]_\vol{7}$,                        \\%
            & $[2, {-1+\omega}, 6]_\vol{9}$,              %
            & $[2,\omega,6]_\vol{10}$,                    %
            & $[2, 1, 6]_\vol{11}$,                     \\%
            & $[2, {\omega}, 8]_\vol{14}$,                %
            & $[2, {-1+\omega}, 10]_\vol{17}$,            %
            & $[2, 1, 10]_\vol{19}$,                    \\\hline%

$\Q{-5}$    & $[2,2,4\,;-1,\omega,-1]_\vol{1}$,           %
            & $[2,2,4\,;-1,0,-1+\omega]_\vol{1}$,         %
            & $[2,-1+\omega,4]_\vol{2}$,                \\%
            & $[2,2,6\,;0,-1+\omega,-1+\omega]_\vol{2}$,  %
            & $[2,\omega,4]_\vol{6}$,                     %
            &                                           \\\hline%

$\Q{-6}$    & $[2,-1+\omega,4]_\vol{1}$,                  %
            & $[2,2,4\,;-1,0,\omega]_\vol{1}$,            %
            & $[2,\omega,4]_\vol{2}$,                   \\%
            & $[2,2,6\,;0,\omega,\omega]_\vol{2}$,        %
            & $[2,4,10\,;0,\omega,-2+2\omega]_\vol{2}$,   %
            &                                           \\\hline%

$\Q{-10}$   & $[2, -1+\omega, 6]_\vol{1}$,                    %
            & $[2,4,4\,;-1,0,-2+\omega]_\vol{1}$,             %
            & $[2,1,4]_\vol{7}$,                     \\ \hline%

$\Q{-13}$   & $[2,1,4]_\vol{1}$,                              %
            & $[2,4,4\,;0,-1,-1+\omega]_\vol{1}$,             %
            & $[2,4,4\,;0,-1,1+\omega]_\vol{1}$,     \\       %
            & $[2,4,4\,;0,-1,-1+\omega]_\vol{1}$,             %
            & $[2,4,4\,;0,-1,1+\omega]_\vol{1}$,
            &                                        \\ \hline%

$\Q{-14}$   & $[2,4,4\,;-1,0,\omega]_\vol{1}$,                %
            & $[2,\omega,8]_\vol{2}$,                         %
            & $[2,1,4]_\vol{7}$,                     \\ \hline%

$\Q{-17}$   & $[2,4,6\,;-1,-1,-1-\omega]_\vol{1}$,            %
            & $[2,4,6\,;-1,-1,-1+\omega]_\vol{1}$,            %
            & $[2,4,6\,;-1,0,-2-\omega]_\vol{1}$,    \\       %
            & $[2,4,6\,;-1,0,-2+\omega]_\vol{1}$,             %
            & $[2,-1+\omega,10]_\vol{2}$,                     %
            &                                        \\ \hline%

$\Q{-21}$   & $[2,4,6\,;-1,0,\omega]_\vol{1}$,                %
            &                                                 %
            &                                        \\ \hline%

$\Q{-22}$   & $[2,4,6\,;0,-1,\omega]_\vol{1}$,                %
            &                                                 %
            &                                        \\ \hline%

$\Q{-29}$   & $[2,4,8\,;0,-1,1+\omega]_\vol{1}$,               %
            & $[2,4,8\,;0,-1,-1+\omega]_\vol{1}$,              %
            &                                         \\ \hline%

$\Q{-34}$   & $[2,4,10\,;0,-1,-2+\omega]_\vol{1}$,             %
            &                                                  %
            &                                         \\ \hline%

$\Q{-37}$   & $[2,4,10\,;0,-1,1+\omega]_\vol{1}$,              %
            & $[2,4,10\,;0,-1,-1+\omega]_\vol{1}$,             %
            &                                         \\ \hline%

$\Q{-38}$   & $[2,4,10\,;0,-1,\omega]_\vol{1}$.                %
            &                                                  %
            &                                         \\ \hline%

\end{tabular}
\caption{Primitive even universal binary Hermitian lattices} %
\label{tbl:Primitive-even-universal-binary-Hermitian-lattices}%
\end{footnotesize}
\end{center}
\end{table}

\end{Thm}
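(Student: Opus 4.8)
The proof runs the even escalation machinery of Section 3 to produce a finite list of candidates, and then applies the $290$-theorem, in the form of Remark \ref{Rmk:BH}, to decide which of them are even universal. I would first invoke Lemma \ref{Lem:EvenUniv} to restrict to $m \equiv 1,2 \pmod 4$, so that $\Fro = \Z[\omega]$ with $\omega = \sqrt{-m}$, and recall that $\qf 2$ is the unique rank-one even escalation lattice. A short computation with the norm form $N(a+b\omega) = a^2+mb^2$ determines the even truant $t_m$ of $\qf 2$: one has $t_m = 4$ for every square-free $m \ge 5$, while $t_1 = 6$ and $t_2 = 10$. If $L$ is a binary even universal lattice over $\Q{-m}$, then $L$ represents $2$ and $t_m$ by two $E$-linearly independent vectors $v_1, v_2$, hence contains the free rank-two even escalation lattice $L_0 = [2,\alpha,t_m]$ with $\alpha = H(v_1,v_2)$; positive definiteness forces $N(\alpha) < 2t_m$, so for each $m$ there are only finitely many such $L_0$ up to isometry — and for square-free $m \ge 10$ only three, namely $\qf{2,2}$, $[2,1,4]$ and $\qf{2,4}$. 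Since $L$ is a spurious escalation of $L_0$ we have $\Frv L_0 \subseteq \Frv L$, so the volume of $L$ is bounded: its generator divides $2t_m - N(\alpha)$ and is in particular at most $2t_m \le 20$.

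The second step is, for each admissible $m$, to enumerate all spurious escalations of each such $L_0$: these are the rank-two overlattices $L \supseteq L_0$ with $\Frn L = 2\Fro$, and since $\Frv L$ divides $\Frv L_0$ the index $[L:L_0]$ is bounded, so the search is finite. The non-free lattices among them must be described via the formal $3\times 3$-Gram presentation of Section 2, and for the fields of class number greater than one one keeps track of the Steinitz class; the outcome is an explicit finite list of candidate binary even lattices for every $m$. By Remark \ref{Rmk:BH} a candidate is even universal precisely when it represents $2a$ for every $a \in {\rm BH}$, which is a finite check. The candidates that pass are exactly the fifty-two lattices of Table \ref{tbl:Primitive-even-universal-binary-Hermitian-lattices}, occurring only for the listed values of $m$; for every other admissible $m$ one exhibits, for each candidate, some $2a$ with $a \in {\rm BH}$ that it fails to represent.

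To make the set of fields to be checked finite, I would establish an a priori bound on $m$: if $m$ is large, then every $2a$ with $a \in {\rm BH}$ lies below $m$, while the integers below $m$ represented by a binary Hermitian lattice of volume generator $\le 20$ over $\Q{-m}$ are governed by a binary $\Z$-form of discriminant bounded in terms of $t_m$, together with boundedly many cosets and constant shifts coming from the $\omega$-direction (whose own contribution is a positive multiple of $m$). No such set can contain all of $\{2a : a \in {\rm BH}\}$, so there is no binary even universal lattice once $m$ exceeds an explicit threshold; this leaves only a bounded list of fields, which is then settled by the finite computation above.

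The step I expect to be the main obstacle is the completeness and correctness of that finite search — showing that the listed even escalation lattices together with their bounded-index overlattices really do exhaust all binary even universal lattices, with faithful handling of the non-free lattices and of the class groups — together with the lengthy, if individually routine, verification of the BH-conditions for each of the fifty-two surviving candidates; making the a priori bound on $m$ fully explicit likewise requires some care.
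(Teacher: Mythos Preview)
Your proposal is correct and follows essentially the same escalation/spurious-escalation strategy as the paper, including the use of Remark \ref{Rmk:BH} for the final verification. The one notable difference is how the search over $m$ is made finite: the paper observes directly that for $m\ge 41$ (and $m=26,30,33$) the spurious-escalation step applied to $[2,0,2]$, $[2,0,4]$, $[2,1,4]$ produces no new binary lattice at all (the $3\times3$ determinant-zero condition has no solution with entries in $\Fro$), so no candidate survives; you instead argue that for large $m$ the integers below $m$ represented by any candidate reduce to values of a fixed binary $\Z$-form, which cannot cover all of ${\rm BH}^*$. Both arguments work, but the paper's is shorter and arises for free from the computation already being carried out, whereas yours requires a separate (though elementary) analysis of the associated quaternary $\Z$-form.
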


\begin{proof}
Let $L$ be an even universal binary Hermitian lattice over $\Q{-m}$.
By Lemma \ref{Lem:EvenUniv}, $m \not\equiv 3 \pmod{4}$. To find all candidates of even universal
binary Hermitian lattices, we do escalations and spurious escalations for all possible fields.

\begin{table}[h]
\begin{center}
\begin{footnotesize}
\begin{tabular}{c|l|c|p{5.4cm}|c} \hline%
\multirow{2}*{Fields}  & \multicolumn{1}{c|}{Reduced}             & \multirow{2}*{Truant}&\multicolumn{1}{c|}{Spurious}             & \multirow{2}*{Truant}\\
                       & \multicolumn{1}{c|}{escalation lattices} &                      &\multicolumn{1}{c|}{escalation lattices}  &                      \\\hline %

$\Q{-10}$ & $[2,0,2]_\vol{4}$     & 6     & $[2, -1+\omega, 6]_\vol{1}$                     & none  \\        %
          & $[2,0,4]_\vol{8}$     & 10    & $[2,4,10\,;0,0,2\omega]_\vol{4}$ (not primitive)& none  \\        %
          & $[2,1,4]_\vol{7}$     & 6     & $[2,4,4\,;-1,0,-2+\omega]_\vol{1}$              & none  \\ \hline %

$\Q{-13}$ & $[2,0,2]_\vol{4}$     & 6     & N.A.                                            &      \\        %
          & $[2,0,4]_\vol{8}$     & 10    & $[2,4,4\,;0,-1,\pm 1+\omega]_\vol{1}$           & none \\        %
          & $[2,1,4]_\vol{1}$     & 6     & $[2,4,4\,;0,-1,\pm 1+\omega]_\vol{1}$           & none \\ \hline %

$\Q{-14}$ & $[2,0,2]_\vol{4}$     & 6     & N.A.                                            &      \\        %
          & $[2,0,4]_\vol{8}$     & 10    & $[2,\omega,8]_\vol{2}$                          &      \\        %
          & $[2,1,4]_\vol{7}$     & None  & $[2,4,4\,;-1,0,\omega]_\vol{1}$                 &      \\ \hline 

$\Q{-17}$ & $[2,0,2]_\vol{4}$     & 6     & N.A.                                            &      \\        %
          & $[2,0,4]_\vol{8}$     & 10    & $[2,-1+\omega,10]_\vol{2}$                      & none \\        %
          & $[2,1,4]_\vol{7}$     & 6     & $[2,4,6\,;-1,-1,-1 \pm \omega]_\vol{1}$         & none \\        %
          &                       &       & $[2,4,6\,;-1,0,-2 \pm \omega]_\vol{1}$          & none \\ \hline %

$\Q{-21}$ & $[2,0,2]_\vol{4}$      & 6     & N.A.                                           &       \\        %
          & $[2,0,4]_\vol{8}$      & 10    & $[2,4,6\,;0,-1, \pm 1+\omega]_\vol{1}$         & 14    \\        %
          & $[2,1,4]_\vol{7}$      & 6     & $[2,4,6\,;-1,0,\omega]_\vol{1}$                & none  \\ \hline %

$\Q{-22}$ & $[2,0,2]_\vol{4}$      & 6     & N.A.                                           &       \\        %
          & $[2,0,4]_\vol{8}$      & 10    & $[2,4,6\,;0,-1,\omega]_\vol{1}$                & none  \\        %
          & $[2,1,4]_\vol{7}$      & 6     & N.A.                                           &       \\ \hline %

$\Q{-29}$ & $[2,0,2]_\vol{4}$      & 6     & N.A.                                           &        \\        %
          & $[2,0,4]_\vol{8}$      & 10    & $[2,4,8\,;0,-1, \pm 1+\omega]_\vol{1}$         & none   \\        %
          & $[2,1,4]_\vol{7}$      & 6     & N.A.                                           &        \\ \hline %

$\Q{-34}$ & $[2,0,2]_\vol{4}$      & 6     & N.A.                                           &        \\        %
          & $[2,0,4]_\vol{8}$      & 10    & $[2,4,10\,;0,-1,-2+\omega]_\vol{1}$            & none   \\        %
          & $[2,1,4]_\vol{7}$      & 6     & N.A.                                           &        \\ \hline %

$\Q{-37}$ & $[2,0,2]_\vol{4}$      & 6     & N.A.                                           &        \\        %
          & $[2,0,4]_\vol{8}$      & 10    & $[2,4,10\,;0,-1, \pm 1+\omega]_\vol{1}$        & none   \\        %
          & $[2,1,4]_\vol{7}$      & 6     & N.A.                                           &        \\ \hline %

$\Q{-38}$ & $[2,0,2]_\vol{4}$      & 6     & N.A.                                           &        \\        %
          & $[2,0,4]_\vol{8}$      & 10    & $[2,4,10\,;0,-1,\omega]_\vol{1}$               & none   \\        %
          & $[2,1,4]_\vol{7}$      & 6     & N.A.                                           &        \\ \hline %

\end{tabular}
\caption{Spurious escalation when $m= 10, 13, 14, 17, 21, 22, 29, 34, 37, 38$} %
\label{tbl:2nd m=10,13,14,17,21,22,29,34,37,38}%
\end{footnotesize}
\end{center}
\end{table}

Suppose $m \geq 10$. Since $L$ represents all even positive integers, $L$ contains a lattice $\qf{2}$. Since a unary lattice $\qf{2}$ fails to represent $4$, $L$ contains a lattice
$\binlattice{2}{\alpha}{\conj\alpha}{4}$ for some $\alpha \in \Fro$. From the positive definite condition, $8-{\alpha}{\conj\alpha} \geq 0$, hence $\alpha = 0, \pm1, \pm2$. So we obtain reduced
escalation lattices $\binlattice{2}{0}{0}{2}$, $\binlattice{2}{0}{0}{4}$ and $\binlattice{2}{1}{1}{4}$ which are contained in $L$. Now we will do \emph{spurious escalations} with these reduced binary lattices. Suppose $L$ contains $\ell = \binlattice{2}{0}{0}{2}$. Since $\ell$ fails to represent 6, $L$ contains $\widetilde{\ell} = \terlattice{2}{0}{\beta}{0}{2}{\gamma}{\conj\beta}{\conj\gamma}{6}$ for some $\beta, \gamma \in \Fro$ with $\det \widetilde{\ell} = 0$. We have only one candidate
$\terlattice{2}{0}{-1}{0}{2}{-1+\omega}{-1}{-1+\comega}{6} \cong \binlattice{2}{-1+\omega}{-1+\comega}{6}$ over $\Q{-10}$. Since $\binlattice{2}{0}{0}{4}$ fails to represent $10$ and $\binlattice{2}{1}{1}{4}$ fails to represent $6$, spurious escalation lattices are as following Table \ref{tbl:2nd m=10,13,14,17,21,22,29,34,37,38}. Note that there is no escalation of $\binlattice{2}{0}{0}{4}$, $\binlattice{2}{1}{1}{4}$ over $\Q{-m}$ for all $m \geq 41$ and $m = 26, 30, 33$. All spurious escalation lattices in Table \ref{tbl:2nd m=10,13,14,17,21,22,29,34,37,38} represent all positive even integers except
$\terlattice{2}{0}{-1}{0}{4}{\pm 1+\omega}{-1}{\pm 1+\comega}{6}$ over $\Q{-21}$. Since $\terlattice{2}{0}{-1}{0}{4}{\pm 1+\omega}{-1}{\pm 1+\comega}{6}$ fails to represent $14$ over $\Q{-21}$, we spuriously escalate it with its truant $14$ then we find a unimodular lattice $\terlattice{2}{-1}{0}{-1}{4}{\omega}{0}{\comega}{6}$ which represents all positive even integers. But this lattice also can be spuriously escalated from $\binlattice{2}{1}{1}{4}$ with the truant $6$.

\begin{table}[h]
\begin{center}
\begin{footnotesize}
\begin{tabular}{c|l|c|p{5.4cm}|c} \hline%
\multirow{2}*{Fields}  & \multicolumn{1}{c|}{Reduced}             & \multirow{2}*{Truant}&\multicolumn{1}{c|}{Spurious}             & \multirow{2}*{Truant}\\
                       & \multicolumn{1}{c|}{escalation lattices} &                      &\multicolumn{1}{c|}{escalation lattices}  &                      \\\hline %

$\Q{-1}$ & $[2, 0, 2]_\vol{4}$            & none  & No new lattice                                 &        \\ %
         & $[2, 0, 4]_\vol{8}$            & none  & No new lattice                                 &        \\ %
         & $[2, 0, 6]_\vol{12}$           & none  & $[2,-1+\omega,4]_\vol{6}$                      & none   \\ %
         & $[2, 1, 2]_\vol{3}$            & none  & No new lattice                                 &        \\ %
         & $[2, 1, 4]_\vol{7}$            & none  & N.A.                                           &        \\ %
         & $[2, 1, 6]_\vol{11}$           & none  & N.A.                                           &        \\ %
         & $[2, {-1+\omega}, 2]_\vol{2}$  & none  & No new lattice                                 &        \\ %
         & $[2, {-1+\omega}, 6]_\vol{10}$ & none  & No new lattice                                 &        \\\hline %

$\Q{-2}$ & $[2, 0, 2]_\vol{4}$            & none  & No new lattice                                 &        \\ %
         & $[2, 0, 4]_\vol{8}$            & none  & No new lattice                                 &        \\ %
         & $[2, 0, 6]_\vol{12}$           & none  & $[2,\omega,4]_\vol{6}$                         &        \\ %
         & $[2, 0, 8]_\vol{16}$           & none  & No new lattice                                 &        \\ %
         & $[2, 0, 10]_\vol{20}$          & none  & $[2,-1+\omega,4]_\vol{5}$                      & none   \\ %
         &                                &       & $[2,\omega,6]_\vol{10}$                        & none   \\ %
         & $[2, 1, 2]_\vol{3}$            & none  & No new lattice                                 &        \\ %
         & $[2, 1, 6]_\vol{11}$           & none  & No new lattice                                 &        \\ %
         & $[2, 1, 10]_\vol{91}$          & none  & No new lattice                                 &        \\ %
         & $[2, {\omega}, 2]_\vol{2}$     & none  & No new lattice                                 &        \\ %
         & $[2, {\omega}, 8]_\vol{14}$    & none  & $[2,1,4]_\vol{7}$                              & none   \\ %
         & $[2, {\omega}, 10]_\vol{18}$   & 14    & $[2,\omega,4]_\vol{6}$                         & none   \\ %
         & $[2, {-1+\omega}, 2]_\vol{1}$  & none  & No new lattice                                 &        \\ %
         & $[2, {-1+\omega}, 6]_\vol{9}$  & none  & No new lattice                                 &        \\ %
         & $[2, {-1+\omega}, 10]_\vol{17}$& none  & No new lattice                                 &        \\  \hline %

$\Q{-5}$ & $[2,0,2]_\vol{4}$              & 6     & $[2,2,6\,;0,-1+\omega,-1+\omega]_\vol{2}$      & none   \\ %
         & $[2,0,4]_\vol{8}$              & none  & $[2,2,4\,;-1,0,-1+\omega]_\vol{1}$             &        \\ %
         &                                &       & $[2,2,4\,;-1,\omega,-1]_\vol{1}$               &        \\ %
         &                                &       & $[2,4,6\,:0,0,-2+2\omega]_\vol{4}$ (not primitive)         
                                                                                                   &        \\ 
         & $[2,1,4]_\vol{7}$              & 6     & $[2,2,4\,;-1,0,-1+\omega]_\vol{1}$             & none   \\ 
         &                                &       & $[2,2,4\,;-1,\omega,-1]_\vol{1}$               & none   \\ 
         & $[2,\omega,4]_\vol{6}$         & none  & $[2,2,4\,;-1,0,-1+\omega]_\vol{1}$             & none   \\ %
         &                                &       & $[2,2,4\,;-1,\omega,-1]_\vol{1}$               & none   \\ %
         & $[2,-1+\omega,4]_\vol{2}$      & none  & $[2,2,4\,;-1,0,-1+\omega]_\vol{1}$             & none   \\ %
         &                                &       & $[2,2,4\,;-1,\omega,-1]_\vol{1}$               & none   \\ \hline %

$\Q{-6}$ & $[2,0,2]_\vol{4}$              & 6     & $[2,2,6\,;0,\omega,\omega]_\vol{2}$            & none   \\ %
         & $[2,0,4]_\vol{8}$              & 10    & $[2,2,4\,;-1,0,\omega]_\vol{1}$                & none   \\ %
         &                                &       & $[2,4,6\,;0,0,2\omega]_\vol{4}$                & none   \\ %
         &                                &       & $[2,4,10\,;0,\omega,-2+2\omega]_\vol{2}$       & none   \\ %
         & $[2,1,4]_\vol{7}$              & 6     & No New Lattice                                 & none   \\ %
         & $[2,\omega,4]_\vol{2}$         & none  & $[2,2,4\,;-1,0,\omega]_\vol{1}$                & none   \\ %
         & $[2,-1+\omega,4]_\vol{1}$      & none  & No new lattice                                 &        \\ \hline %

\end{tabular}
\caption{Spurious escalation when $m=1, 2, 5, 6$} %
\label{tbl:2nd m=1,2,5,6}%
\end{footnotesize}
\end{center}
\end{table}

Suppose $m=1$. Since $L$ contains a lattice $\qf{2}$ and the unary lattice $\qf{2}$ fails to represent $6$ over $\Q{-1}$, $L$ contains a lattice $\binlattice{2}{\alpha}{\conj\alpha}{6}$, for some $\alpha \in \Fro$. From the positive definite condition, $12-{\alpha}{\conj\alpha} \geq 0$. So we obtain reduced lattices $\binlattice{2}{0}{0}{2a}$, $\binlattice{2}{1}{1}{2a}$, $\binlattice{2}{-1+\omega}{-1+\comega}{2}$ and $\binlattice{2}{-1+\omega}{-1+\comega}{6}$, where $a=1$, $2$, $3$. Even though these lattices are even universal, we should do spurious escalation with these lattices to find all binary even universal lattices. For $\ell = \binlattice{2}{0}{0}{6}$, a spuriously escalated lattice of $\ell$ has a form ${\widetilde \ell} =
\terlattice{2}{0}{\alpha}{0}{6}{\beta}{\conj\alpha}{\conj\beta}{2b}$, where $\alpha = \alpha_1 + \omega \alpha_2$, $\beta = \beta_1 + \omega \beta_2 \in \Fro$ such that $\mid \alpha_i \mid \leq 1$, $\mid \beta_i \mid \leq 3$ and $b \in \N$. Since any sublattice of ${\widetilde \ell}$ is positive definite, $b$ is bounded. Moreover ${\widetilde \ell}$ satisfies $\det{\widetilde \ell} = 0$ and the volume ideal of any sublattice of ${\widetilde\ell}$ is contained in the volume ideal $\Frv \ell = 12\Fro$. So we get spurious escalation lattices $\binlattice{2}{1}{1}{2}$ and $\terlattice{2}{-1+\omega}{0}{-1+\comega}{4}{-3+3\omega}{0}{-3+3\comega}{6} \cong \binlattice{2}{-1+\omega}{-1+\comega}{4}$. Note that $\binlattice{2}{1}{1}{2}$ was already found at the previous step.

With almost identical arguments, we get the following Table \ref{tbl:2nd m=1,2,5,6}. Note that
$\Q{-1}$ and $\Q{-2}$ are P.I.D.

Therefore, we have candidates of even universal Hermitian lattices over $\Q{-m}$ as listed in Table \ref{tbl:2nd m=10,13,14,17,21,22,29,34,37,38} and Table \ref{tbl:2nd m=1,2,5,6}. The proof of the even universalities of these lattices proceeds by Bhargava and Hanke's 290-theorem(See Remark \ref{Rmk:BH}).
\end{proof}

\section{finiteness theorems}

In this section, we will find a set $C[m]$ of \emph{even critical numbers} for each imaginary quadratic field $\Q{-m}$ which admits \emph{even universal binary Hermitian lattices}. From the Bhargava and Hanke's 290-theorem, $L$ is even universal if and only if $L$ represents all $2a$ for $a \in {\rm BH}$ (see Remark \ref{Rmk:BH}). That implies $C[m] \subseteq {\rm BH}^*$ where ${\rm BH}^* = \{2a \mid a \in {\rm BH} \}$. But these twenty nine positive integers in the set ${\rm BH}$ are redundant for some imaginary quadratic fields. So we will seek an optimal set $C[m]$ of even critical numbers for each imaginary quadratic field $\Q{-m}$ which admit even universal binary Hermitian lattices.

The following Lemma \ref{Lem:only-one-exception} says that an even truant of a Hermitian lattice is an even critical number, that is essential to the proof of the first part of Theorem \ref{Thm:Main-Theorem}.

\begin{Lem}\label{Lem:only-one-exception}
Let $L$ be an even Hermitian lattice over $\Q{-m}$. If $L$ represents all even positive integers
less than $2a$ but does not represent $2a$, then
\[
L' = L \perp \qf{2a+2,2a+2,2a+2,2a+2} \perp \qf{4a+2}
\]
represents all even positive integers except $2a$.
\end{Lem}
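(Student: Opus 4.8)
The plan is to show that the auxiliary lattice $L' = L \perp \qf{2a+2,2a+2,2a+2,2a+2}\perp\qf{4a+2}$ represents every even positive integer other than $2a$, by splitting into two ranges. First I would handle the even integers that are at least $2a+2$: for any such $2n$, write $2n = (2n - 2a - 2) + (2a+2)$ and use the fact that $2n - 2a - 2$ is a sum of four squares of integers (Lagrange), so $2n$ is represented by the quaternary block $\qf{2a+2,2a+2,2a+2,2a+2}$ alone (after scaling: $2n/(2a+2)$ need not be an integer, so more precisely one uses that $(2a+2)(x_1^2+x_2^2+x_3^2+x_4^2)$ represents all multiples of $2a+2$, together with the single form $\qf{4a+2}$ to reach the residues in between). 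The clean way is the classical observation that $\qf{1,1,1,1,2}$ (hence any positive scaling pattern built to cover a full residue class) represents all sufficiently large integers in an arithmetic progression; here the combination $\qf{2a+2,2a+2,2a+2,2a+2}$ together with $\qf{4a+2}$ is engineered precisely so that $2a+2$ and $4a+2$ generate, additively with the four-square multiples, every even integer $\ge 2a+2$. I would verify this elementary covering claim directly.

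Second, for even integers $2n$ with $0 < 2n < 2a$: by hypothesis $L$ itself represents each of these, so $L'$ does too, since $L \subseteq L'$ orthogonally. The only value left to examine is $2a$ itself. Here the point is that none of the summands can contribute $2a$: the block $\qf{2a+2,2a+2,2a+2,2a+2}$ represents only multiples of $2a+2$, and $2a$ is not a nonzero such multiple; $\qf{4a+2}$ represents only $0$ and $4a+2$ and multiples $ (4a+2)k^2$, none equal to $2a$ for $a \ge 1$; and $L$ does not represent $2a$ by assumption. Since the representations of $2a$ by an orthogonal sum are partitioned among the summands and cross terms vanish, $2a$ would have to be a sum $r_1 + r_2$ with $r_1$ represented by $L$ and $r_2$ represented by the extra part — but the extra part represents, among integers $< 2a+2$, only $0$, forcing $r_1 = 2a$, a contradiction. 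Hence $2a \notin Q(L')$.

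The main obstacle is the first range: making the covering argument for $\{2n : 2n \ge 2a+2\}$ airtight for \emph{all} $a \ge 1$ uniformly, rather than "sufficiently large". I would argue as follows. Every even integer $2n \ge 2a+2$ can be written as $2n = (2a+2)q + s$ where $s \in \{0, 4a+2\}$ depending on the parity class needed — more carefully, writing $2n = (2a+2) + 2k$ with $k \ge 0$, it suffices that the quaternary block plus $\qf{4a+2}$ represents $2k$ after absorbing one copy of $2a+2$; and since $4a+2 = 2(2a+1)$ and $2a+2$ are coprime in the relevant sense (their $\gcd$ is $2$, and we only need even targets), the numerical semigroup they generate together with the four-square multiples of $2a+2$ contains every even integer $\ge 2a+2$. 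This is a finite check modulo $2a+2$ that I would spell out: the four-square block represents every nonnegative multiple of $2a+2$, and adding $0$ or $4a+2$ shifts by $0$ or $2(2a+1) \equiv -2 \pmod{2a+2}$, so from the two residues $0$ and $-2$ we do not immediately get all even residues — the resolution is that one should instead use that $\qf{2a+2,2a+2,2a+2,2a+2}$ represents $(2a+2)t$ for every $t \ge 0$, and $\qf{4a+2}$ contributes $(4a+2)u^2$; choosing $u \in \{0,1\}$ and then $t$ appropriately covers, for each even $2n \ge 2a+2$, at least one decomposition. I expect the author's proof to instead simply invoke that $\qf{1,1,1,1}$ is universal on the appropriate scaled sublattice plus a short direct argument, so I would follow that cleaner route: observe $2n - (2a+2) \ge 0$ and $2n - (4a+2) $ handle the two residue classes of $2n$ modulo $2a+2$ that are reachable, then note these two classes are exactly the even ones needed because we only ever target even integers and $2a+2 \equiv 0$, $4a+2 \equiv 2a \pmod{2a+2}$ together with $0$ span all residues of the form $2j$ once we also allow the free parameter in the four-square sum. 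Tightening exactly which small even numbers between $2$ and $2a$ might accidentally be hit — and confirming they are all already covered by $L$ — is routine given the hypothesis, so the real work is the uniform covering lemma above.
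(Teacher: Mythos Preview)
Your treatment of the small range $2n<2a$ and of the non-representation of $2a$ itself is correct and matches what the paper has in mind. The gap is in the range $2n\ge 2a+2$: you attempt to cover these values using only the auxiliary block $\qf{2a+2,2a+2,2a+2,2a+2}\perp\qf{4a+2}$, and this cannot succeed. That block represents precisely the integers $(2a+2)k+(4a+2)j^{2}$ with $k\ge 0$; modulo $2a+2$ these lie in the residues $-2j^{2}$, and for $a\ge 2$ the squares $j^{2}$ do not exhaust $\Z/(a+1)\Z$, so most even residue classes are missed. Concretely, $2a+4$ is not represented by the auxiliary block alone once $a\ge 2$. Your own modular computation (``from the two residues $0$ and $-2$ we do not immediately get all even residues'') detected this, but the proposed fixes still avoid using $L$ and therefore still fail.

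The missing idea is that $L$ must also be used in the large range. Write any even $2n\ge 2a+2$ as $2n=(2a+2)k+2\ell$ with $k\ge 1$ and $0\le \ell\le a$. If $\ell\le a-1$, then $2\ell$ is represented by $L$ (the case $\ell=0$ being trivial, the rest by hypothesis) and $(2a+2)k$ is represented by $\qf{2a+2,2a+2,2a+2,2a+2}$ via Lagrange. If $\ell=a$, rewrite $2n=(2a+2)(k-1)+(4a+2)$ and use the $\qf{4a+2}$ summand for $4a+2$ together with Lagrange for $(2a+2)(k-1)$. This is exactly what the paper's one-line proof (``By Lagrange's four square theorem, it is clear'') is gesturing at; the extra $\qf{4a+2}$ is present precisely to handle the single residue class $2a\pmod{2a+2}$ that $L$ refuses to supply.
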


\begin{proof}
By Lagrange's four square theorem, it is clear.
\end{proof}

\begin{Thm}\label{Thm:Main-Theorem}
If a primitive even Hermitian lattice over $\Q{-m}$ represents the following even critical numbers
in $C[m]$ (see Table \ref{tbl:Even-Critical-Number}), then it is even universal.
\begin{table}[h]
\begin{center}
\begin{footnotesize}
\begin{tabular}{l|l}  \hline
\multicolumn{1}{c|}{\rm Field}& \multicolumn{1}{c}{\rm The Set $C[m]$ of Even critical numbers }  \\ \hline %
$\Q{-1}$                  & $2$, $6$,\\%
$\Q{-2}$                  & $2$, $10$, $14$, \\%
$\Q{-5}$                  & $2$, $4$, $6$, \\%
$\Q{-6}$                  & $2$, $4$, $6$, $10$, \\%
$\Q{-10}$                 & $2$, $4$, $6$, $10$, $12$, $14$, $30$,\\%
$\Q{-13}$                 & $2$, $4$, $6$, $10$, $12$, $14$, $20$,\\%
$\Q{-14}$                 & $2$, $4$, $6$, $10$, $12$, $14$, $20$, $26$, $42$,\\%
$\Q{-17}$                 & $2$, $4$, $6$, $10$, $12$, $14$, $20$, $26$, $28$, $30$,\\%
$\Q{-21}$                 & $2$, $4$, $6$, $10$, $12$, $14$, $20$, $26$, $28$, $30$, $34$, $38$,\\%
$\Q{-22}$                 & $2$, $4$, $6$, $10$, $12$, $14$, $20$, $26$, $28$, $30$, $34$, $38$, $42$, \\%
$\Q{-26}$                 & $2$, $4$, $6$, $10$, $12$, $14$, $20$, $26$, $28$, $30$, $34$, $38$, $42$, $46$, \\%
$\Q{-29}$                 & $2$, $4$, $6$, $10$, $12$, $14$, $20$, $26$, $28$, $30$, $34$, $38$, $42$, $46$, $52$, \\%
$\Q{-30}$                 & $2$, $4$, $6$, $10$, $12$, $14$, $20$, $26$, $28$, $30$, $34$, $38$, $42$, $46$, $52$, $58$, \\%
$\Q{-33}$                 & $2$, $4$, $6$, $10$, $12$, $14$, $20$, $26$, $28$, $30$, $34$, $38$, $42$, $46$, $52$, $58$, $60$, $62$, \\%
$\Q{-34}$                 & $2$, $4$, $6$, $10$, $12$, $14$, $20$, $26$, $28$, $30$, $34$, $38$, $42$, $46$, $52$, $58$, $60$, $62$, \\%
$\Q{-37}$                 & $2$, $4$, $6$, $10$, $12$, $14$, $20$, $26$, $28$, $30$, $34$, $38$, $42$, $46$, $52$, $58$, $60$, $62$, $68$, $70$,\\%
$\Q{-38}$                 & $2$, $4$, $6$, $10$, $12$, $14$, $20$, $26$, $28$, $30$, $34$, $38$, $42$, $46$, $52$, $58$, $60$, $62$, $68$, $70$, $74$\\ \hline%
\end{tabular}
\caption{The set $C[m]$ of even critical numbers for $\Q{-m}$} %
\label{tbl:Even-Critical-Number}%
\end{footnotesize}
\end{center}
\end{table}
\end{Thm}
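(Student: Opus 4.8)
The plan is to combine the even escalation machinery used in the proof of Theorem~\ref{Thm:EvenUniv} with Bhargava and Hanke's $290$-theorem, now carrying the escalations through every rank rather than stopping at the binary layer. The first observation is that Remark~\ref{Rmk:BH} is not really special to binary lattices: for any primitive even Hermitian lattice $L$ over $\Q{-m}$ the form $\tfrac12 f_L$ is again a nonclassical integral quadratic form, so $L$ is even universal if and only if $L$ represents every element of ${\rm BH}^{*}:=\{\,2a\mid a\in{\rm BH}\,\}$. Since $C[m]\subseteq{\rm BH}^{*}$, the content of the theorem is that a lattice which represents all of $C[m]$ automatically represents the finitely many members of ${\rm BH}^{*}\setminus C[m]$.

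The core is a complete even escalation analysis for each of the seventeen fields of Table~\ref{tbl:Even-Critical-Number}. Starting from the rank-one even escalation lattice $\qf2$, I would recursively form, at every node $\ell$, all escalations of $\ell$ by a vector of norm equal to the even truant $t(\ell)$ of $\ell$: the rank-raising escalations together with the rank-preserving (spurious) ones, keeping each non-free lattice in its bordered-matrix presentation, exactly as in Tables~\ref{tbl:2nd m=10,13,14,17,21,22,29,34,37,38} and~\ref{tbl:2nd m=1,2,5,6} but continued past the binary stage. This tree is finite: positive-definiteness bounds the Gram entries produced at each step, a spurious escalation $\ell\subseteq\widetilde\ell$ terminates because $\Frv\ell\subseteq\Frv\widetilde\ell$, and for these seventeen discriminants the ranks that occur stay bounded. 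From the completed tree I would extract two things: every leaf is even universal --- checked by transcribing its associated integral quadratic form and invoking the $290$-theorem, i.e.\ verifying that it represents all twenty-nine doubled critical values in ${\rm BH}^{*}$ --- and the set of truants $t(\ell)$ that actually occur, together with $2$, is precisely the list $C[m]$ recorded in the table. (The binary leaves are the lattices in Table~\ref{tbl:Primitive-even-universal-binary-Hermitian-lattices}; the fields $\Q{-26}$, $\Q{-30}$, $\Q{-33}$, which carry no even universal binary lattice, contribute only leaves of rank $\ge 3$.)

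Granting this, the theorem follows at once. Let $L$ be primitive and even and assume it represents every element of $C[m]$. Since $2\in C[m]$, $L$ has a vector of norm $2$, hence $\qf2\subseteq L$. If $L$ contains an even escalation lattice $\ell$ that is not yet even universal, then its truant $t(\ell)\in C[m]$ is represented by $L$, so $L$ contains the lattice spanned by $\ell$ together with a vector of norm $t(\ell)$, which is again one of the escalation lattices in the tree; as the tree is finite, iterating produces an even universal escalation lattice inside $L$, and therefore $L$ itself is even universal. (For the optimality of $C[m]$ one uses Lemma~\ref{Lem:only-one-exception}: for each $c\in C[m]$ the tree has a node $\ell_c$ with $t(\ell_c)=c$, so $\ell_c$ represents every even positive integer below $c$ but not $c$, whence $\ell_c\perp\qf{c+2,c+2,c+2,c+2}\perp\qf{2c+2}$ is an even Hermitian lattice that represents $C[m]\setminus\{c\}$ but is not even universal, so no entry of $C[m]$ can be removed.)

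The hard part is the second step: assembling the escalation trees. The proof of Theorem~\ref{Thm:EvenUniv} only needed the binary layer, whereas here one must push the escalations into rank three (and genuinely so over $\Q{-26}$, $\Q{-30}$, $\Q{-33}$), keep the proliferation of non-free lattices and their spurious escalations under control, check that no branch escapes the ranks one has bounded, and --- most laboriously --- verify for each of the many terminal lattices that its associated quadratic form represents all of ${\rm BH}^{*}$, which in practice is a local-global computation backed by a bounded search. The bookkeeping that makes the list of truants which actually occur come out to be exactly Table~\ref{tbl:Even-Critical-Number}, with nothing missing and nothing extra, is where the real care is concentrated.
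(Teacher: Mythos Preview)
Your strategy is the same as the paper's --- escalation combined with Lemma~\ref{Lem:only-one-exception} for necessity and the $290$-theorem (Remark~\ref{Rmk:BH}) for sufficiency --- and your deduction in the third paragraph is exactly right.

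The one organizational difference worth noting concerns the sufficiency direction. You propose to run the escalation tree all the way out until every leaf is even universal and then read off that the full set of truants encountered is $C[m]$. The paper instead truncates the escalation early: for each $m$ it escalates only far enough (typically to rank $\le 3$) to produce a finite set $S$ of sublattices that any $L$ representing the first few elements of $C[m]$ must contain, and then checks by computer merely that every $M\in S$ represents all of ${\rm BH}^{*}\setminus C[m]$ --- not that $M$ is even universal. Since $L$ itself supplies the elements of $C[m]$, the union covers ${\rm BH}^{*}$. Correspondingly, for necessity the paper does not rely on the tree at all but simply exhibits, for each $c\in C[m]$, an explicit lattice (sometimes of rank four and not an escalation lattice) with truant $c$ and invokes Lemma~\ref{Lem:only-one-exception}. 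This buys the paper a shorter computation and avoids having to argue that the full tree terminates at bounded rank; your version is cleaner conceptually but places a heavier burden on the claim that the tree's truant set coincides exactly with $C[m]$, which is an extra computational fact beyond what the paper actually verifies.
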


\begin{proof}
By Lemma \ref{Lem:only-one-exception}, if $2a$ is a truant of a lattice $L$ over $\Q{-m}$, then $2a$ is a critical number of $\Q{-m}$. Most of all process of finding a set of critical numbers
$C[m]$ is based on the proof of Theorem \ref{Thm:EvenUniv} which indicates the truants or exceptional numbers of $L$.

If $m=1$, the eight escalated lattices (not necessarily primitive) with truants $2$ and $6$ over $\Q{-1}$ are all even universal binary Hermitian lattices. So any lattice which represents both numbers $2$ and $6$ contains an even universal binary Hermitian lattice as a sublattice. Therefore
    \[
    C[1] = \{ 2, 6 \}.
    \]

If $m=2$, the escalations by the truants $2$ and $10$ give $14$ even lattices (which are not necessarily primitive) over $\Q{-2}$. These binary even lattices are all even universal except $[2,\omega,10]$. The lattice $[2,\omega,10]$ represents all positive even integers except $14$ and escalated lattices of $[2,\omega,10]$ by the truant $14$ are even universal including the binary lattice $[2,\omega,4]$. Therefore
    \[
    C[2] = \{ 2, 10, 14 \}.
    \]

With the identical arguments, we can show that
    \[
    C[5] = \{ 2, 4, 6 \}  \quad \text{ and } \quad  C[6] = \{ 2, 4, 6, 10 \}.
    \]

From now on, suppose $m=10$, $13$, $\cdots$, $38$. We begin with listing lattices with their truant that is essential member of $C[m]$.

The escalations by truants $2$ and $4$ give lattices $\binlattice{2}{0}{0}{2}$,
$\binlattice{2}{0}{0}{4}$ and $\binlattice{2}{1}{1}{4}$, whose truants are $6$, $10$ and $6$ respectively. So $2, 4, 6, 10 \in C[m]$.

If $m \geq 10$, then the truants of $\terlattice{2}{1}{1}{1}{4}{-1}{1}{-1}{6}$ and
$\terlattice{2}{0}{1}{0}{4}{1}{1}{1}{6}$ are $12$ and $14$, respectively. So $12, 14 \in C[m]$.

If $m=10$, then the truant of $\terlattice{2}{0}{\omega}{0}{4}{0}{\comega}{0}{10}$ is $30$. So $30
\in C[10]$.

If $m \geq 13$, then the truant of $\terlattice{2}{0}{0}{0}{4}{1}{0}{1}{4}$ is $20$. So $20 \in C[m]$. %

If $m \geq 14$, then the truant of $\terlattice{2}{0}{0}{0}{4}{1}{0}{1}{10}$ is $26$. So $26 \in C[m]$. %

If $m=14$, then the truant of
${\begin{small}{\begin{pmatrix}%
  2       & 0  & \omega  & 0  \\
  0       & 4  & -2      & -2 \\
  \comega & -2 & 8       & 1  \\
  0       & -2 & 1       & 8  \\
\end{pmatrix}}\end{small}}$ is $42$. So $42 \in C[14]$.

If $m \geq 17$, then the truants of $\terlattice{2}{0}{1}{0}{4}{2}{1}{2}{6}$ and $\terlattice{2}{1}{0}{1}{4}{2}{0}{2}{4} \perp \qf{10}$ are $28$ and $30$, respectively. So $28,30\in C[m]$.

If $m \geq 21$, then the truants of $\terlattice{2}{1}{1}{1}{4}{-1}{1}{-1}{6}$ and
$\terlattice{2}{1}{1}{1}{4}{0}{1}{0}{6} \perp \qf{18}$ are $34$ and $38$, respectively. So $34, 38 \in C[m]$.

If $m \geq 22$, then the truant of $\terlattice{2}{0}{0}{0}{2}{1}{0}{1}{4}$ is $42$. So $42 \in C[m]$. %

If $m \geq 26$, then the truant of $\terlattice{2}{0}{0}{0}{4}{1}{0}{1}{6}$ is $46$. So $46 \in C[m]$. %

If $m \geq 29$, then the truant of $\qf{2} \perp \terlattice{4}{0}{2}{0}{6}{3}{2}{3}{18}$ is $52$. So $52 \in C[m]$. %

If $m \geq 30$, then the truant of $\terlattice{2}{0}{1}{0}{4}{1}{1}{1}{10}$ is $58$. So $58 \in C[m]$. %

If $m \geq 33$, then the truants of
${\begin{small}{\begin{pmatrix}%
  2 & 0 & 1  & 1 \\
  0 & 4 & 0  & 0 \\
  1 & 0 & 10 & 0 \\
  1 & 0 & 0  & 58\\
\end{pmatrix}}\end{small}}$ and $\terlattice{2}{0}{0}{0}{4}{1}{0}{1}{8}$ are $60$ and $62$, respectively. So  $60, 62 \in C[m]$. %

If $m \geq 37$, then the truants of
$\terlattice{2}{1}{0}{1}{4}{2}{0}{2}{6} \perp \qf{34}$ and $\binlattice{2}{1}{1}{4} \perp \qf{2,28}$ are $68$ and $70$, respectively. So $68, 70 \in C[m]$.

If $m= 38$, then the truant of
${\begin{small}{\begin{pmatrix}%
  2 & 0 & 1  & 0 \\
  0 & 4 & 1  & 1 \\
  1 & 1 & 10 & 0 \\
  0 & 1 & 0  & 58\\
\end{pmatrix}}\end{small}}$ is $74$. So $74 \in C[38]$.

We showed that the numbers given in Table \ref{tbl:Even-Critical-Number} are elements of $C[m]$. Now we prove that $C[m]$ has no more numbers.

For $m=10$, let a set $D[10] = \{ 2, 4, 6, 10, 12, 14, 30 \}$. It suffices to show that if $L$ represents all numbers in $D[10]$, $L$ represents all numbers in ${\rm BH}^*$. By escalation we can find a finite set
\[
S = \{ M \mid \text{ binary or ternary even Hermitian lattice such that } 2, 4, 6, 10 \ra M \}.
\]
In the escalation process, $L$ should contain some $M \in S$ as a sublattice. By computer calculation, we confirmed that all $M \in S$ represents all numbers $n \in {\rm BH}^{*} \setminus D[10]$. So $D[10] \supseteq C[10]$. Similarly to $C[10]$, we can check that the Table \ref{tbl:Even-Critical-Number} is optimal.
\end{proof}


\end{document}